\theoremstyle{plain}
\newtheorem{theorem}{Theorem}[section]
\newtheorem{lemma}[theorem]{Lemma}
\theoremstyle{definition}
\newcommand{\bC}{\mathbb{C}}
\newcommand{\bN}{\mathbb{N}}
\newcommand{\cG}{\mathcal{G}}
\newcommand{\cP}{\mathcal{P}}
\newcommand{\cH}{\mathcal{H}}
\newcommand{\cS}{\mathcal{S}}
\begin{document}

\baselineskip 6.2mm

\title[On order automorphisms of the effect algebra]{On order automorphisms of the effect algebra\footnote{To appear in Acta Sci. Math. Szeged}} 

\author{Roman Drnov\v sek}


\begin{abstract}
We give short proofs of two \v{S}emrl's descriptions of order automorphisms of the effect algebra. 
This sheds new light on both formulas that look quite complicated. 
Our proofs rely on Moln\'{a}r's characterization of order automorphisms of the cone of all positive operators.
\end {abstract}

\maketitle

\noindent
 {\it Key words}: self-ajoint operator, operator interval, effect algebra, order isomorphism, operator monotone function \\
 {\it Math. Subj. Classification (2010)}: 47B49 \\

\section{Introduction}

Throughout the paper, let $\cH$ be a complex Hilbert space of  $\dim \cH \ge 2$ with the inner product 
$\langle \cdot , \cdot  \rangle$. 
By $\cS(\cH)$ we denote the set of all bounded linear selfadjoint operators on 
$\cH$. An operator $A \in \cS(\cH)$ is said to be {\it positive}, $A \ge 0$, if 
$\langle A x , x \rangle \ge 0 $ for all $x \in  \cH$, and $A$ is called {\it strictly positive}, $A > 0$, 
if $A$  is positive and invertible.
The set  $\cS(\cH)$ is partially ordered by the relation $\le$ defined by $A \le B \iff B - A \ge 0$.
A real function $f$ defined on an interval $J \subseteq [0, \infty)$ is said to be {\it operator monotone on} $J$ 
if, for every operators $A$ and $B$ in $\cS(\cH)$ with spectra contained in $J$, the inequality  
$A \le B$ implies that $f(A) \le f(B)$. 

An additive map $T : \cH \to \cH$ is {\it conjugate-linear} if $T(\lambda x) = \overline{\lambda} T x$
for every $x \in \cH$ and $\lambda \in \bC$. For a bounded conjugate-linear operator $T : \cH \to \cH$
we define the adjoint $T^*$ to be the unique bounded conjugate-linear map $T^* : \cH \to \cH$
satisfying $\langle T x , y \rangle = \overline{\langle x , T^* y \rangle}$
for all pairs $x, y \in \cH$.

Let us recall the definition of the group $G_1$ from \cite{Se17b}.
By $GL(\cH)$ we denote the general linear group on $\cH$,
that is, the multiplicative group of all invertible linear bounded operators on
$\cH$. Furthermore, let $CGL(\cH)$ denote the group of all invertible bounded
either linear or conjugate-linear operators on $\cH$. The multiplicative group 
$S^1 = \{z \in  \bC : |z| = 1\}$  can be naturally embedded  into CGL(H) by $z \mapsto z I$, 
where $I$ denotes the identity operator on $\cH$. By $G_1$  we denote the quotient
group  $G_1 = CGL(H)/S^1$. 
Note that each element of $G_1$ has the form $[T] = \{ z T : z \in S^1\}$, where $T : \cH \to
\cH$ is a bounded invertible either linear or conjugate-linear operator on $\cH$.

Assume that $A$ and $B$ are operators in $\cS(\cH)$ such that $B - A > 0$. 
Then {\it operator intervals} are defined by 
$$ [A, B] = \{ C \in \cS(\cH) : A  \le C \le B\} , $$
$$ (A, B] = \{ C \in \cS(\cH) : A  < C \le B\} , $$
and
$$ (A, B) = \{ C \in \cS(\cH) : A  < C < B\}  . $$
Similarly, we define
$$ (A, \infty) = \{ C \in \cS(\cH) : A  <  C \}  , $$
$$ [A, \infty) = \{ C \in \cS(\cH) : A  \le  C \}  , $$
and  $(-\infty, \infty) = \cS(\cH)$. 
The notations  $[A, B)$, $(-\infty, A)$, $(-\infty, A]$ are now self-explanatory.
The operator interval $[0, I]$ is also called the {\it effect algebra} on $\cH$.

Let $J$ and $K$ be operator intervals. A bijective map 
$\phi : J \to K$ is called an {\it order isomorphism} if for every pair of operators 
$A$, $B$ in $J$ we have
$$ A \le B \iff \phi(A) \le \phi(B) ,$$
and it is called an {\it order anti-isomorphism} if for every pair of operators 
$A$, $B$ in $J$, 
$$ A \le B \iff \phi(A) \ge \phi(B) .$$
If, in addition, $J = K$, the map $\phi$ is called an {\it order automorphism} in the first case, and 
an {\it order anti-automorphism} in the second one.

Order isomorphisms of operator intervals have been studied systematically in \cite{Se17a}.
Their study was motivated by problems in mathematical physics; see \cite{Mo01}, \cite{Mo07} and the references therein.
In particular, it was shown in \cite{Se17a} that every operator interval is order isomorphic or order 
anti-isomorphic to one of the following operator intervals: $(-\infty, \infty)$,  $[0, \infty)$, $(0, \infty)$, 
and  $[0, I]$.
In the first three cases order automorphisms have simple forms. For example, order automorphisms of the operator interval  
$[0, \infty)$ are only congruence transformations 
$A \mapsto S A S^*$ for some invertible bounded either linear or conjugate-linear operator  
$S : \cH \to \cH$ (see Theorem \ref{auto_positive}).
The form of order automorphisms looks quite complicated only in the case of the effect algebra  $[0, I]$.
Namely, it was proved in \cite{Se17a} that each order automorphism $\phi$ of the effect algebra $[0, I]$ has the form 
$$ \phi (A) = f_p \left( (I+(T T^*)^{-1})^{1/2} (I - (I + T A T^*)^{-1}) (I+(T T^*)^{-1})^{1/2} \right) , 
\ A \in [0, I] , $$
where $T : \cH \to \cH$ an invertible bounded either linear or conjugate-linear operator, $p$ is a negative real number, 
and $f_p$ is the operator monotone function defined by \eqref{function f_p} below. 
The aim of the present paper is to give a short proof that better explains this formula. \\

\section{Preliminaries}

For every real number $p < 1$ the function $f_p$ is defined by 
\begin{equation}
\label{function f_p}
 f_p (x) = \frac{x}{p x + 1- p} .
\end{equation}
For  $p \in [0,1)$, let the interval  $[0, \infty)$ be the domain of $f_p$, while for $p < 0$ its domain is the interval $[0, 1-\frac{1}{p})$. Clearly, the common domain for all members of the set $\cG= \{f_p : p < 1\}$ 
is the interval $[0, 1]$, and each function $f_p \in \cG$ is a bijective increasing function on the unit interval [0,1] onto itself. By $\cP$ we denote the multiplicative group of positive real numbers.
We begin with a lemma that slightly improves \cite[Lemma 3.10]{Se17a}.

\begin{lemma}
\label{f_p-monotone}
For  every $p \in [0,1)$ the function $f_p$ is operator monotone on the interval $[0, \infty)$, and for every real number 
$p < 0$  it is operator monotone on the interval $[0, 1-\frac{1}{p})$.
The set $\cG$ with the operation of functional composition is a group of functions from $[0, 1]$ onto itself, and we have 
$$ f_p \circ f_q = f_{p+q-pq} $$
and 
\begin{equation}
\label{inverse}
 f_p^{-1} =  f_{\frac{p}{p-1}} .
\end{equation}
Moreover, the map $p \mapsto f_{1-p}$ is a group isomorphism between the groups $\cP$ and $\cG$. 
\end{lemma}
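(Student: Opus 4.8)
The plan is to treat the two halves of the lemma—operator monotonicity and the group structure—separately, deriving both from a single algebraic rewriting of $f_p$.

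For operator monotonicity, the idea is to express $f_p$ through the operator monotone \emph{decreasing} function $t \mapsto t^{-1}$. When $p \neq 0$ a short computation from \eqref{function f_p} gives
$$ f_p(x) = \frac{1}{p} - \frac{1-p}{p}\cdot \frac{1}{px + 1 - p}, $$
so that, by the functional calculus, $f_p(A) = \frac1p I - \frac{1-p}{p}\,(pA + (1-p)I)^{-1}$ (the inner map is affine, hence composes cleanly with inversion). The classical fact that $S \mapsto S^{-1}$ is operator monotone decreasing on strictly positive operators, together with the affine substitution $A \mapsto pA + (1-p)I$, then yields monotonicity after a sign check: for $p \in (0,1)$ the substitution is increasing and the coefficient $-\tfrac{1-p}{p}$ is negative, while for $p < 0$ the substitution is decreasing and the coefficient is positive, so that in each case exactly two of the three operations reverse the order and the composite is increasing. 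The case $p=0$ is trivial since $f_0 = \mathrm{id}$. The one point demanding care is the domain: I must check that $pA + (1-p)I$ is strictly positive precisely when $\sigma(A)$ lies in the stated interval, which for $p<0$ reduces to the requirement $\sigma(A) \subset [0, 1 - \tfrac1p)$ so that the denominator stays positive and avoids the pole.

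For the group structure I would introduce the conjugating bijection $g(x) = \frac{1-x}{x}$ of $(0,1]$ onto $[0,\infty)$ (with $g(0)=+\infty$). A direct computation gives $1/f_p(x) = p + (1-p)/x$, equivalently
$$ g(f_p(x)) = (1-p)\, g(x), $$
so that $f_p = g^{-1} \circ M_{1-p} \circ g$, where $M_c$ denotes multiplication by $c$. This single identity mechanizes everything: composing two such conjugates multiplies the scalars, $g \circ (f_p \circ f_q) \circ g^{-1} = M_{(1-p)(1-q)}$, and since $(1-p)(1-q) = 1-(p+q-pq)$ this equals $M_{1-(p+q-pq)}$, giving $f_p \circ f_q = f_{p+q-pq}$. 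Solving $(1-p)(1-q)=1$ for $q$ yields the inverse formula \eqref{inverse}. Closure (that $p+q-pq<1$ and $\tfrac{p}{p-1}<1$) follows because the condition $1-p>0$ is preserved under multiplication and reciprocation, and associativity is inherited from composition, so $\cG$ is a group; one also verifies directly from \eqref{function f_p} that $f_p(0)=0$ and $f_p(1)=1$, so each $f_p$ maps $[0,1]$ onto itself.

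Finally, the isomorphism assertion is immediate from the same relation: the parameter map $p \mapsto 1-p$ converts the group law $p+q-pq$ into ordinary multiplication, since $1-(p+q-pq) = (1-p)(1-q)$. Thus $a \mapsto f_{1-a}$ sends the product $ab$ in $\cP$ to $f_{1-ab} = f_{1-a}\circ f_{1-b}$, and it is bijective because $a \mapsto 1-a$ maps $(0,\infty)$ onto $(-\infty,1)$ while $p \mapsto f_p$ is injective. I expect the only genuine obstacle to be the monotonicity and domain bookkeeping in the $p<0$ case; the entire group-theoretic half is purely formal once the conjugation $g(f_p(x)) = (1-p)\,g(x)$ is in hand.
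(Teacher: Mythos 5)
Your proof is correct. On the operator-monotonicity half you take essentially the paper's route: your single identity $f_p(x)=\frac{1}{p}-\frac{1-p}{p}\,(px+1-p)^{-1}$ is, after rescaling the resolvent, exactly the paper's two sign-adapted formulas $f_p(A)=\frac{1}{p}\,I-\frac{1-p}{p^2}\bigl(A+(\frac{1}{p}-1)I\bigr)^{-1}$ for $p\in(0,1)$ and $f_p(A)=\frac{1}{p}\,I+\frac{1-p}{p^2}\bigl((1-\frac{1}{p})I-A\bigr)^{-1}$ for $p<0$, both resting on the same classical fact that inversion is order-reversing on strictly positive operators, and your domain bookkeeping for $p<0$ agrees with the paper's interval $[0,1-\frac{1}{p})$. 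Where you genuinely diverge is the algebraic half, which the paper compresses into ``it is straightforward to verify the remaining assertions'': instead of the implied direct computation of $f_p\circ f_q$, you conjugate by $g(x)=\frac{1-x}{x}$ to obtain $g\circ f_p=(1-p)\,g$ on $(0,1]$, exhibiting $\cG$ as a conjugate copy of the multiplicative group of positive scalars. That one identity simultaneously yields the composition law via $(1-p)(1-q)=1-(p+q-pq)$, closure and \eqref{inverse} by solving $(1-p)(1-q)=1$, and renders the isomorphism $p\mapsto f_{1-p}$ between $\cP$ and $\cG$ structurally transparent rather than a verification---arguably a better explanation of why that isomorphism exists than the paper offers. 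Two small points to keep explicit: the conjugation identity holds only pointwise on $(0,1]$, so the endpoint needs the separate check $f_p(0)=0$ (which you make), and the claim that $f_p$ maps $[0,1]$ onto itself also uses continuity and strict increase of the scalar function, immediate from $f_p'(x)=(1-p)(px+1-p)^{-2}>0$ for $p<1$, which you leave implicit.
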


\begin{proof}
Since the function $f_0(x) = x$ is clearly operator monotone on the interval $[0, \infty)$,
we can assume that $p \neq 0$. 
We will use the well-known fact that if  $A$ and $B$ in $\cS(\cH)$ are strictly positive operators,
then $A \le B \iff A^{-1} \ge B^{-1}$. 
If $p \in (0,1)$, then for any positive operator $A$, 
$$ f_p(A) = \frac{1}{p} \, I - \frac{1-p}{p^2} \left( A +  \left( \frac{1}{p}-1 \right) I \right)^{-1} , $$
and so the function $f_p$ is operator monotone on the interval $[0, \infty)$.
If $p < 0$, then for any operator $A \in [0, (1-1/p) I )$, 
$$ f_p(A) = \frac{1}{p} \, I + \frac{1-p}{p^2} \left( \left( 1-\frac{1}{p} \right) I - A\right)^{-1} , $$
implying that the function $f_p$  is operator monotone on the interval $[0, 1-\frac{1}{p})$.

It is straightforward to verify the remaining assertions.
\end{proof}

The following assertion is actually shown in the proof of \cite[Corollary 5.2]{Se17a}. 

\begin{lemma}
\label{restricted} 
If $\phi : [0,I] \to  [0,I]$ is an order automorphism, then 
$$ \phi((0,I]) = (0,I]. $$
\end{lemma}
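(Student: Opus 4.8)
The plan is to give a purely order-theoretic description of $(0,I]$ inside $[0,I]$, so that its invariance under $\phi$ becomes automatic. First I would record that $0$ is the least element of $[0,I]$ and is therefore fixed by every order automorphism, so that $\phi(0)=0$; since $\phi^{-1}$ is again an order automorphism, it then suffices to prove $\phi((0,I])\subseteq(0,I]$, or equivalently that strict positivity is detectable by the order relation alone.

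To this end I would introduce a relation of order-orthogonality: for $A,B\in[0,I]$ say that $A\perp B$ if the only $C\in[0,I]$ with $C\le A$ and $C\le B$ is $C=0$. This relation is phrased solely in terms of $\le$ and the element $0$, both of which $\phi$ preserves, so $A\perp B\iff\phi(A)\perp\phi(B)$. The heart of the argument is then the claim that, for a nonzero $A\in[0,I]$, the operator $A$ is strictly positive if and only if $A$ has no nonzero orthogonal partner, i.e. there is no nonzero $B\in[0,I]$ with $A\perp B$. Granting this, the set $(0,I]=\{A\in[0,I]:A\ne0,\ \text{and no nonzero } B \text{ satisfies } A\perp B\}$ is order-theoretically defined, whence $\phi((0,I])=(0,I]$.

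The tool for both directions of the claim is the range-inclusion (Douglas) lemma: for $A\ge0$ and a unit vector $y$ one has $P_y\le tA$ for some $t>0$ precisely when $y\in\mathrm{ran}(A^{1/2})$, where $P_y$ denotes the rank-one projection onto $y$. If $A\ge\varepsilon I$ is strictly positive, then $\mathrm{ran}(A^{1/2})=\cH$, so for any nonzero $B$ I choose a unit vector $x\in\mathrm{ran}(B)\subseteq\mathrm{ran}(B^{1/2})$ and obtain a small multiple $sP_x$ lying below both $A$ and $B$; thus $A$ has no orthogonal partner. Conversely, if $A$ is not strictly positive, then $A^{1/2}$ is not bounded below, so $\mathrm{ran}(A^{1/2})\ne\cH$; choosing a unit vector $y\notin\mathrm{ran}(A^{1/2})$ and noting that every $C\in[0,I]$ with $C\le P_y$ has the form $cP_y$, the lemma forces $c=0$, so that $A\perp P_y$ with $P_y\ne0$.

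I expect the only delicate point to be the converse when $A$ is injective but not bounded below, so that $0$ lies in the continuous spectrum of $A$: here $A$ has no eigenvector at $0$ and $\mathrm{ran}(A^{1/2})$ is dense, so one cannot exhibit an orthogonal partner by a kernel vector, and any description of $(0,I]$ through kernels, supports, or ranks would fail to separate this $A$ from a genuinely invertible one. The range-inclusion lemma is exactly what rescues this case, since a proper dense subspace still omits some unit vector $y$, and that $y$ produces the required partner $P_y$. Once the characterization is in place the lemma follows in one line.
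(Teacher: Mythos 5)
Your proof is correct, but it cannot be compared line by line with the paper's, because the paper offers no proof at all: Lemma \ref{restricted} is imported wholesale, with the remark that ``the following assertion is actually shown in the proof of \cite[Corollary 5.2]{Se17a}.'' So your proposal is not a variant of the paper's argument but a self-contained replacement for the citation, and as such it is a genuine contribution of content. Your route --- defining order-disjointness $A\perp B$ (no nonzero common lower bound in $[0,I]$), observing that $\phi(0)=0$ since $0$ is the least element, and then characterizing $(0,I]$ as the nonzero elements admitting no nonzero disjoint partner --- reduces the lemma to the purely operator-theoretic claim that invertibility in $[0,I]$ is order-definable, and your use of the Douglas range-inclusion criterion ($P_y\le tA$ for some $t>0$ iff $y\in\mathrm{ran}(A^{1/2})$) is exactly the right tool: as you correctly anticipate, kernel- or rank-based descriptions break down for an injective, non-invertible $A$ with dense $\mathrm{ran}(A^{1/2})$, while a proper (even dense) range still misses some unit vector $y$, and that $y$ yields the disjoint partner $P_y$. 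The payoff of your approach over the citation is transparency and transportability: the invariant $\perp$ is preserved by any order isomorphism between operator intervals with least element, so the same argument localizes invertibility in other intervals as well.

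Three small points you should make explicit in a final write-up, all routine: (i) in the forward direction, after obtaining $P_x\le tB$ from Douglas, take $s=\min\{1/t,\varepsilon,1\}$ so that $sP_x\le B$, $sP_x\le\varepsilon I\le A$, and $sP_x\in[0,I]$; (ii) the step from ``$A$ not invertible'' to ``$\mathrm{ran}(A^{1/2})\ne\cH$'' deserves a line: if $A^{1/2}$ were surjective, then $\ker(A^{1/2})=\mathrm{ran}(A^{1/2})^{\perp}=\{0\}$, so $A^{1/2}$ would be bijective, hence invertible by the open mapping theorem, making $A$ invertible; (iii) the claim that $0\le C\le P_y$ forces $C=cP_y$ follows since $\langle Cz,z\rangle=0$ for $z\perp y$ gives $y^{\perp}\subseteq\ker C$, whence $\mathrm{ran}(C)\subseteq(\ker C)^{\perp}\subseteq\bC y$ by selfadjointness. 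With these spelled out, the proof is complete and correct.
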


The following description of order automorphisms of the operator interval  $[0, \infty)$ was proved in \cite{Mo01}; see also \cite[Theorem 2.5.1]{Mo07}.

\begin{theorem}
\label{auto_positive}
Assume that $\phi : [0, \infty) \to [0, \infty)$ is an order
automorphism. Then there exists an invertible bounded either linear or conjugate-linear operator  
$S : \cH \to \cH$ such that 
$$ \phi(A) = S A S^* $$
for every $A \in [0, \infty)$.
\end{theorem}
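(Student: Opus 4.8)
The plan is to separate the congruence $A\mapsto SAS^*$ into a purely order-theoretic skeleton and an algebraic reconstruction: first recover the one-dimensional subspaces of $\cH$ from the order alone, then recover the projective-linear structure to produce $S$, and finally reconstruct the value of $\phi$ on a general operator from its rank-one ``shadow''. First I would observe that $\phi(0)=0$, since $0$ is the least element of $[0,\infty)$. Then I would characterise the rank-one positive operators order-theoretically: a nonzero $A\ge0$ has one-dimensional range (i.e.\ $A=\lambda\,x\otimes x$) if and only if the interval $[0,A]$ is totally ordered. Indeed $[0,\lambda\,x\otimes x]=\{\mu\,x\otimes x:0\le\mu\le\lambda\}$ is a chain, while if $\mathrm{ran}\,A$ is at least two-dimensional one finds two incomparable rank-one operators below $A$ supported on orthogonal eigendirections. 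Since this property is preserved by $\phi$ and $\phi^{-1}$, the map $\phi$ permutes the rank-one operators; and because $Q_1\le Q_2$ for nonzero rank-one $Q_i$ forces a common range line, $\phi$ sends each ray $\{\lambda\,x\otimes x:\lambda\ge0\}$ onto another such ray. Hence $\phi$ induces a bijection $\bar\phi$ of the projective space $\mathbb{P}(\cH)$ of lines, and on each ray it restricts to an increasing bijection of $(0,\infty)$.

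Next I would recover enough incidence geometry to invoke the fundamental theorem of projective geometry. The lattice of order ideals of $[0,\infty)$ that are closed under positive scaling reproduces the lattice of closed subspaces of $\cH$, the ideal generated by a family of rank-one operators corresponding to the closed span of their lines; consequently $\bar\phi$ preserves coplanarity of lines, which is exactly the collinearity relation of $\mathbb{P}(\cH)$. When $\dim\cH\ge3$ the fundamental theorem then yields a bounded invertible semilinear bijection $S_0$ of $\cH$ with $\bar\phi([x])=[S_0x]$ (the two-dimensional case, where collinearity is vacuous, must be settled separately using the order-recoverable orthogonality relation between rank-one projections). Ruling out wild field automorphisms of $\bC$, so that $S_0$ may be taken linear or conjugate-linear, is forced by the continuity implicit in the increasing ray-bijections above. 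Replacing $\phi$ by $\psi(A)=S_0^{-1}\,\phi(A)\,(S_0^*)^{-1}$, which is again an order automorphism, reduces matters to the case in which $\psi$ maps every ray onto itself.

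Finally I would reconstruct $\psi$ from its action on rank-one operators by means of the elementary criterion, valid for invertible $A>0$, that $x\otimes x\le A$ if and only if $\langle A^{-1}x,x\rangle\le1$. Writing $\psi(\lambda\,x\otimes x)=h_x(\lambda)\,x\otimes x$ and applying $\psi$ to the relations $\lambda\,x\otimes x\le A$, the criterion expresses the quadratic form of $\psi(A)^{-1}$ in every direction through the forms of $A^{-1}$ and the increasing functions $h_x$; demanding that these data assemble into a genuine quadratic form for every invertible $A$ forces each $h_x$ to be one and the same linear scaling $\lambda\mapsto c\lambda$, so that $\psi(A)=cA$ on invertible operators. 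The positive constant $c$ is absorbed into $S_0$, and the non-invertible positives are handled by approximating $A$ with $A+\varepsilon I$ and passing to the order-limit. The main obstacle is precisely this passage from order to linear structure: extracting a semilinear map from combinatorial incidence data and, above all, excluding pathological automorphisms of $\bC$ and securing the boundedness of $S_0$, since the order relation does not by itself visibly suppress the discontinuous automorphisms that $\bC$ admits.
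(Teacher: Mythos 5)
This theorem is not proved in the paper at all: it is imported as a black box from Moln\'ar \cite{Mo01} (see also \cite[Theorem 2.5.1]{Mo07}), and the whole point of the paper is to deduce \v{S}emrl's effect-algebra theorem \emph{from} it. So your proposal has to stand on its own, and what it sketches is in essence the architecture of Moln\'ar's original argument: detect rank one order-theoretically (lower interval totally ordered), pass to an induced bijection of projective space, invoke the fundamental theorem of projective geometry to get a semilinear $S_0$, reduce to a ray-preserving automorphism $\psi$, and reconstruct $\psi$ from the criterion $x\otimes x\le A\iff\langle A^{-1}x,x\rangle\le 1$. The skeleton is sound, and one small slip is easily repaired: in infinite dimensions a positive operator of rank $\ge 2$ need not have any eigenvectors, so instead of ``orthogonal eigendirections'' you should use the range criterion ($x\otimes x\le A$ iff $x=A^{1/2}y$ with $\|y\|\le 1$) to produce two incomparable rank-one operators below $A$; the same criterion also makes your collinearity claim precise, since $z\in\mathrm{span}\{x,y\}$ iff every $A$ dominating rank-ones along $x$ and along $y$ dominates some positive multiple of $z\otimes z$.

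The genuine gaps are exactly the two steps you flag and then wave through. First, ``ruling out wild field automorphisms \dots is forced by the continuity implicit in the increasing ray-bijections'' is not an argument: the functions $h_x$ constrain $\psi$ only along a \emph{fixed} ray, whereas the automorphism $\sigma$ of $\bC$ produced by the fundamental theorem acts \emph{across} lines, via $[x+\lambda y]\mapsto[S_0x+\sigma(\lambda)S_0y]$; monotonicity of each $h_x$ says nothing about $\sigma$. One must actually couple different lines through order relations (e.g.\ track which rank-one rays lie below operators supported on a fixed two-dimensional subspace) to show $\sigma$ is monotone on $\bR$, hence the identity on $\bR$, so that $\sigma$ is the identity or conjugation; and boundedness of $S_0$ must likewise be extracted — for instance from the fact that all $x\otimes x$ with $\|x\|=1$ lie in $[0,I]$, so their images lie in the order-bounded set $[0,\phi(I)]$ — which requires a uniform lower bound on the $h_x$ that you have not secured. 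Second, the concluding step ``demanding that these data assemble into a genuine quadratic form forces $h_x(\lambda)=c\lambda$'' is the technical heart of the proof and is only asserted: carrying it out needs (i) the fact that $\psi$ preserves invertibility, which must itself be made order-theoretic (it can be: $A$ is invertible iff every ray carries a rank-one below $A$, because a surjective positive square root is invertible), and (ii) an explicit polarization computation showing that the functions $g_x(t)=1/h_x(1/t)$ satisfying $\langle\psi(A)^{-1}x,x\rangle=g_x(\langle A^{-1}x,x\rangle)$ for all invertible $A$ are one common linear map. Finally, the case $\dim\cH=2$, where the fundamental theorem of projective geometry gives nothing, is mentioned but not settled. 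In short: correct roadmap, essentially Moln\'ar's, but at the two places you yourself identify as the main obstacles the proof does not yet exist.
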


\vspace{3mm}
\section{Results}

The functions from the group $\cG$ induce order automorphisms of the effect algebra $[0, I]$
via functional calculus.
 
\begin{lemma}
\label{homeo}
For every real number $p < 1$, the map $A \mapsto f_p (A)$, $A \in [0, I]$, is an order automorphism 
and a homeomorphism of the effect algebra $[0, I]$.
\end{lemma}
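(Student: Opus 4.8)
The plan is to establish three things for each fixed $p < 1$: that the map $\phi_p : A \mapsto f_p(A)$ sends $[0,I]$ into itself, that it is a bijection of $[0,I]$ with itself, and that it (together with its inverse) respects the order. I would handle the order-preserving and codomain properties first, since they follow almost immediately from the preliminaries, and treat the bijectivity and continuity as the remaining content.

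\medskip

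\noindent\emph{Step 1 (the map lands in $[0,I]$ and is monotone).}
Since $\cG$ is a group of bijections of $[0,1]$ onto itself (Lemma \ref{f_p-monotone}), each $f_p$ maps $[0,1]$ onto $[0,1]$ and is strictly increasing there. For $A \in [0,I]$ the spectrum of $A$ lies in $[0,1]$, so by the spectral mapping theorem the spectrum of $f_p(A)$ lies in $f_p([0,1]) = [0,1]$, whence $f_p(A) \in [0,I]$; thus $\phi_p$ maps $[0,I]$ into $[0,I]$. For monotonicity I invoke Lemma \ref{f_p-monotone} directly: each $f_p$ is operator monotone on a domain containing $[0,1]$ (on $[0,\infty)$ when $p \in [0,1)$, and on $[0, 1-\frac1p)$, which contains $[0,1]$, when $p<0$), so $A \le B$ in $[0,I]$ implies $f_p(A) \le f_p(B)$.

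\medskip

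\noindent\emph{Step 2 (bijectivity, and that it is an order \emph{automorphism}).}
Here the group structure of $\cG$ does the work. By \eqref{inverse}, $f_p^{-1} = f_{p/(p-1)}$, and for $p<1$ one checks $p/(p-1) < 1$ as well, so $f_p^{-1} = f_q$ for some $q < 1$ is \emph{itself} a member of $\cG$. Hence $\phi_q = \phi_p^{-1}$ on $[0,I]$ (the composition $f_p \circ f_q = \mathrm{id}$ passes through functional calculus), showing $\phi_p$ is a bijection of $[0,I]$ whose inverse is again one of our maps. Applying Step 1 to both $f_p$ and $f_q$ gives ``$A \le B \implies \phi_p(A) \le \phi_p(B)$'' and ``$\phi_p(A) \le \phi_p(B) \implies A \le B$'' (apply $\phi_q$ to the latter), so $\phi_p$ is an order automorphism.

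\medskip

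\noindent\emph{Step 3 (homeomorphism).}
It remains to see $\phi_p$ is continuous on $[0,I]$ in the operator norm; since $\phi_p^{-1}=\phi_q$ is of the same type, continuity of $\phi_p$ automatically gives continuity of the inverse, yielding a homeomorphism. Continuity follows from the fact that for a fixed scalar function $g$ that is continuous on a compact interval $[a,b]$, the induced functional-calculus map $A \mapsto g(A)$ is norm-continuous on the set of selfadjoint operators with spectra in $[a,b]$; here $g = f_p$ is even rational with no poles on $[0,1]$. I expect this to be the main obstacle, in the sense that it is the one claim not immediately packaged by the earlier lemmas, though for the explicit rational form of $f_p$ one can argue concretely: continuity of inversion $A \mapsto (A+cI)^{-1}$ on the relevant strictly positive cone combined with the explicit formulas for $f_p$ from the proof of Lemma \ref{f_p-monotone} makes the continuity transparent and avoids appealing to the general functional-calculus continuity theorem.
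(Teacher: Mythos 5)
Your proposal is correct and takes essentially the same route as the paper: the order-automorphism part rests on the operator monotonicity of $f_p$ and of $f_p^{-1}=f_{p/(p-1)}$ (Lemma \ref{f_p-monotone} together with \eqref{inverse}), and continuity in both directions comes from the explicit formulas in the proof of Lemma \ref{f_p-monotone}, which exhibit $A \mapsto f_p(A)$ as a composition of translations, multiplication by a real constant, and the continuous inversion $A \mapsto A^{-1}$ on strictly positive operators. The extra details you spell out (the spectral mapping argument that $f_p(A)\in[0,I]$ and the check that $p/(p-1)<1$) are harmless elaborations of what the paper leaves implicit.
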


\begin{proof}
Since the functions $f_p$ and $f_p^{-1}$ are operator monotone on the interval $[0,1]$ by Lemma \ref{f_p-monotone}, 
the map $A \mapsto f_p (A)$ is an order automorphism of the effect algebra $[0, I]$.
Furthermore, the two formulas for $f_p (A)$ given in the proof of Lemma \ref{f_p-monotone} ensure that 
the map $A \mapsto f_p (A)$  is a composition of translations, multiplication by a real constant and the map 
$A \mapsto A^{-1}$ (defined on the set of all invertible operators). 
Since all of these maps are continuous, we conclude together with \eqref{inverse} that 
the map $A \mapsto f_p (A)$ is continuous in both directions. This completes the proof.
\end{proof}

The following order automorphisms of the effect algebra $[0, I]$ were introduced in \cite{Se17a}.

\begin{lemma}
\label{fi_p,T}
Let $p$ be a negative real number and $T : \cH \to \cH$ an invertible bounded either linear or conjugate-linear operator. Then  the map $\phi_{p, T} : [0,I] \to  [0,I]$ given by 
$$ \phi_{p, T} (A) = f_p \left( (I+(T T^*)^{-1})^{1/2} (I - (I + T A T^*)^{-1}) (I+(T T^*)^{-1})^{1/2} \right) , 
\ A \in [0, I] , $$
is an order automorphism and a homeomorphism of the effect algebra $[0, I]$. 
\end{lemma}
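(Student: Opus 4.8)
The plan is to recognize $\phi_{p,T}$ as a composition $f_p \circ \psi$ of two order automorphisms of $[0,I]$, where
$$ \psi(A) = (I+(TT^*)^{-1})^{1/2} \, (I - (I + TAT^*)^{-1}) \, (I+(TT^*)^{-1})^{1/2} , \quad A \in [0,I] . $$
Since $f_p$ acting by functional calculus is already an order automorphism and a homeomorphism of $[0,I]$ by Lemma \ref{homeo}, and since a composition of two such maps is again one, the whole statement reduces to proving that $\psi$ is an order automorphism and a homeomorphism of $[0,I]$. Throughout I would abbreviate $S = TT^*$, a strictly positive operator, and $G = (I + S^{-1})^{1/2}$.

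The core of the argument is to split $\psi$ into elementary building blocks and track how each transforms the interval $[0,I]$ and the order. The congruence $A \mapsto TAT^*$ is an order isomorphism of $[0,I]$ onto $[0,S]$ (this is the easy direction of Theorem \ref{auto_positive}, valid whether $T$ is linear or conjugate-linear); the translation $X \mapsto I + X$ carries $[0,S]$ isomorphically onto $[I, I+S]$; inversion $Y \mapsto Y^{-1}$ carries $[I,I+S]$ onto $[(I+S)^{-1}, I]$, reversing the order (every operator here dominates $I$, hence is strictly positive, so inversion applies and is order-reversing); the reflection $Z \mapsto I - Z$ carries $[(I+S)^{-1}, I]$ onto $[0, S(I+S)^{-1}]$, reversing the order once more; and finally the congruence $W \mapsto GWG$ preserves the order and carries $[0, S(I+S)^{-1}]$ onto $[0, GS(I+S)^{-1}G]$. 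As the order is reversed exactly twice, the composite $\psi$ is order-preserving, and it suffices to check that the top endpoint lands at $I$.

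This endpoint identity $\psi(I) = GS(I+S)^{-1}G = I$ is the one genuine computation, and it is what explains the precise shape of the normalizing factor $G$. I would carry it out using that $G$, $S$, and $(I+S)^{-1}$ are mutually commuting functions of the single positive operator $S$: then $G^2 = I + S^{-1} = S^{-1}(I+S)$, whence
$$ \psi(I) = G^2\, S\, (I+S)^{-1} = S^{-1}(I+S)\, S\, (I+S)^{-1} = I . $$
Together with the obvious $\psi(0) = 0$, this shows that $\psi$ maps $[0,I]$ bijectively and order-isomorphically onto $[0,I]$, i.e. $\psi$ is an order automorphism of $[0,I]$; had the factor $G$ been chosen differently, the image would have been some other interval $[0,\cdot]$ and the composition with $f_p$ would not land in the effect algebra.

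For the homeomorphism claim I would note that each building block above is a homeomorphism onto its image: the congruences and translations are continuous with continuous inverses, inversion is a homeomorphism on the invertible operators (and each operator met here is bounded below, since $I + TAT^* \ge I$, so the relevant inverses stay uniformly controlled), and $f_p$ together with its inverse $f_{p/(p-1)}$ is continuous by Lemma \ref{homeo} and \eqref{inverse}. Composing these, $\phi_{p,T} = f_p \circ \psi$ is continuous in both directions. I expect the only real obstacle to be the endpoint computation $\psi(I) = I$; the rest is bookkeeping of the two order-reversals and of the continuity of standard operations.
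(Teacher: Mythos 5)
Your proposal is correct and takes essentially the same route as the paper: both factor $\phi_{p,T}$ as $f_p$ (covered by Lemma \ref{homeo}) composed with the inner map $\psi$, identify $\psi$ as an order isomorphism of $[0,I]$ onto itself --- your endpoint computation $G\,S(I+S)^{-1}G = I$ is exactly the paper's assertion that $A \mapsto I-(I+TAT^*)^{-1}$ maps $[0,I]$ onto $[0,(I+(TT^*)^{-1})^{-1}]$, which the congruence by $(I+(TT^*)^{-1})^{1/2}$ then normalizes --- and both get bicontinuity by decomposing into continuous elementary operations, the paper via the identity $I-(I+TAT^*)^{-1} = \frac{1}{2} f_{1/2}(TAT^*)$ and you by tracking the blocks (congruences, translation, inversion on operators bounded below, reflection) directly. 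The differences are purely presentational: you spell out the chain of interval bijections and the two order reversals that the paper's proof leaves implicit.
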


\begin{proof}
Since the map $A \mapsto I - (I + T A T^*)^{-1}$, \ $A \in [0, I]$, is an order isomorphism of  the effect algebra 
$[0, I]$ onto the operator interval $[0,  (I+(T T^*)^{-1})^{-1}]$, the map 
$$ A \mapsto  (I+(T T^*)^{-1})^{1/2} (I - (I + T A T^*)^{-1}) (I+(T T^*)^{-1})^{1/2} = 
f_p^{-1}( \phi_{p, T} (A))  ,  \ A \in [0, I] , $$
is an order automorphism of the effect algebra $[0, I]$.
Now, we apply Lemma \ref{homeo} to conclude that $\phi_{p, T}$  is also an order automorphism of the effect algebra $[0, I]$.  

To prove the continuity of $\phi_{p, T}$, we note that 
$$ \phi_{p, T} (A) =
f_p \left( \frac{1}{2} \, (I+(T T^*)^{-1})^{1/2} f_{1/2}(T A T^*) (I+(T T^*)^{-1})^{1/2} \right) , 
\ A \in [0, I] .$$
Hence, $\phi_{p, T}$ is a composition of two congruence transformations and two maps from the Lemma \ref{homeo}. Since all of them are continuous, it is continuous as well.
The same conclusion holds for the inverse of  $\phi_{p, T}$, completing the proof.
\end{proof}

As the main contribution of this paper, we give a short proof of \cite[Theorem 2.3]{Se17a}.
Our proof relies on Theorem \ref{auto_positive}, and it gives us a better insight into the structure of 
order automorphisms of the effect algebra $[0, I]$.

\begin{theorem}
\label{short_proof}
Assume that $\phi : [0, I] \to [0, I]$ is an order
automorphism. Then there exist a negative real number $p$ and an invertible
bounded either linear or conjugate-linear operator  $T : \cH \to \cH$ such that 
$\phi (A) = \phi_{p, T} (A)$ for all $A \in [0, I]$.
\end{theorem}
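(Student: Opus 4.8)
The plan is to reduce the problem on the effect algebra $[0,I]$ to the known classification on $[0,\infty)$, Theorem \ref{auto_positive}, by conjugating $\phi$ with explicit order isomorphisms between these two intervals. First I would fix a standard order isomorphism $\Psi : [0,\infty) \to [0,I)$; the natural candidate coming from the formulas in the excerpt is $\Psi(X) = I - (I+X)^{-1}$, which is an order isomorphism of $[0,\infty)$ onto $[0,I)$. By Lemma \ref{restricted} an order automorphism $\phi$ of $[0,I]$ maps $(0,I]$ onto $(0,I]$ and hence fixes the ``top'' $I$ and maps $[0,I)$ onto $[0,I)$; so $\Psi^{-1}\circ \phi \circ \Psi$ is a well-defined order automorphism of $[0,\infty)$.

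\begin{proof}[Plan of proof]
Applying Theorem \ref{auto_positive} to the conjugated map $\Psi^{-1}\circ \phi \circ \Psi$ yields an invertible (linear or conjugate-linear) operator $S$ with $(\Psi^{-1}\circ \phi \circ \Psi)(X) = S X S^*$ for all $X \in [0,\infty)$. Unwinding this gives
$$ \phi(A) = \Psi\bigl( S\,\Psi^{-1}(A)\, S^* \bigr), \qquad A \in [0,I), $$
where $\Psi^{-1}(A) = (I-A)^{-1} - I$. The next step is the algebraic heart of the argument: one must rewrite the right-hand side in the form asserted in the statement. Writing $T = S$ and expanding $\Psi\bigl( S((I-A)^{-1}-I)S^*\bigr)$, the goal is to massage the expression $I - (I + S((I-A)^{-1}-I)S^*)^{-1}$ into
$$ f_p\!\left( (I+(T T^*)^{-1})^{1/2}\,(I - (I + T A T^*)^{-1})\,(I+(T T^*)^{-1})^{1/2} \right) $$
for a suitable negative $p$. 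I expect this to require the resolvent/Woodbury-type identity for $(I + S(I-A)^{-1}S^* - SS^*)^{-1}$, rewriting everything so that the operator $TAT^*$ appears and the scalar prefactors combine into a single $f_p$ with parameter determined by $SS^*$ versus $TT^*$.

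A cleaner route, which I would pursue first, is to avoid matching the formula by brute force and instead exploit the group structure established in Lemmas \ref{f_p-monotone}, \ref{homeo} and \ref{fi_p,T}. The strategy is to show that every order automorphism $\phi$ can be written as a composition $\phi_{p,T} = f_p \circ g_T$, where $g_T(A) = (I+(TT^*)^{-1})^{1/2}(I-(I+TAT^*)^{-1})(I+(TT^*)^{-1})^{1/2}$ realizes the congruence part and $f_p$ absorbs the remaining one-parameter freedom. Concretely, after producing $S$ from Theorem \ref{auto_positive}, I would set $T$ so that $TT^*$ matches the operator dictated by $S$, reduce to the case $T=I$ by composing with $\phi_{p,T}^{-1}$ (which is again of the allowed form by Lemma \ref{fi_p,T} and the group law $f_p\circ f_q = f_{p+q-pq}$), and thereby assume $\phi$ commutes with the congruence structure. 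What then survives is an order automorphism fixing each spectral projection's functional-calculus behaviour, which must be induced by a scalar operator monotone bijection of $[0,1]$, i.e. some $f_p \in \cG$.

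The main obstacle I anticipate is precisely the identification of the parameter $p$ and the verification that the scalar prefactors $(I+(TT^*)^{-1})^{1/2}$ are forced: the congruence $X \mapsto SXS^*$ on $[0,\infty)$ does not by itself respect the normalization built into $[0,I]$ (the point $I$ must stay fixed), and reconciling this normalization is exactly what pins down both the form $g_T$ and the need for a genuinely negative $p$ rather than $p=0$. Keeping careful track of how $I \in [0,I]$ corresponds to $\infty$ under $\Psi$, and how $\phi(I)=I$ constrains $S$, should make the bookkeeping transparent and convert the apparently complicated formula for $\phi_{p,T}$ into a transparent composition of the maps already shown to be order automorphisms.
\end{proof}
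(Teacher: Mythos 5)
Your opening move is the same as the paper's: transport $\phi$ to an order automorphism of $[0,\infty)$ and invoke Theorem \ref{auto_positive} (the paper conjugates by the order anti-isomorphism $A \mapsto A^{-1}-I$ of $(0,I]$ onto $[0,\infty)$ rather than your $\Psi$, but that difference is cosmetic). After that, however, there are genuine gaps. First, the ``algebraic heart'' is not merely deferred but pointed in a wrong direction: with your $\Psi$, the identity $\phi(A)=\Psi\bigl(S\,\Psi^{-1}(A)\,S^*\bigr)$ cannot be put in the form $\phi_{p,T}$ with $T=S$. A scalar test shows this: if $SS^*=sI$ your formula reads $\phi(aI)=\frac{sa}{1-a+sa}\,I$, while $\phi_{p,T}$ with $TT^*=tI$ gives $\frac{(t+1)a}{(1-p)+(p+t)a}\,I$; matching coefficients forces $t=s(1-p)-1$, i.e.\ $TT^*=(1-p)SS^*-I$, so $T=S$ would require $SS^*=-\frac1p I$, a specific scalar multiple of the identity. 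The missing idea is precisely the paper's trick: pick a scalar $\lambda$ strictly beyond the relevant norm so that $\lambda I-SS^*$ (in your orientation, $(1-p)SS^*-I$) is strictly positive, take its square root to define $R=(\lambda I-SS^*)^{-1/2}S$ and $T=(R^*)^{-1}$; this choice of $\lambda$ is exactly what produces the genuinely negative parameter, via $q=1-1/\lambda\in(0,1)$ and $p=q/(q-1)<0$. Your alternative ``cleaner route'' does not repair this: its final step --- that whatever automorphism survives after stripping the congruence must be functional calculus by some $f_p\in\cG$ --- is essentially the theorem itself (a nontrivial characterization of bi-operator-monotone bijections of $[0,1]$), and you offer no argument for it, nor is the ``reduce to $T=I$'' step made precise.

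Second, even granting $\phi=\phi_{p,T}$ on $[0,I)$, the extension to all of $[0,I]$ is a substantive step your plan omits. Note that Lemma \ref{restricted} gives $\phi((0,I])=(0,I]$; to get $\phi([0,I))=[0,I)$ you must apply it to the conjugated automorphism $A\mapsto I-\phi(I-A)$ --- the fact that $\phi$ fixes $I$ only yields invariance of $[0,I]\setminus\{I\}$, which is strictly larger than $[0,I)$, because $[0,I)$ consists of those $A$ with $I-A$ positive \emph{and invertible}. For the same reason $[0,I]\setminus[0,I)$ is a large set (all $A\le I$ with $I-A$ singular), not just the point $I$, and since $\phi$ is not assumed continuous, agreement on $[0,I)$ does not propagate automatically. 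The paper closes this gap with a separate argument: for $A\in[0,I]$ it takes the decreasing sequence $A_n=(1-\frac1n)A+\frac1n I$ in $(0,I]$, uses the homeomorphism property of $\phi_{p,T}$ from Lemma \ref{fi_p,T} to get $\phi(A_n)\to B=\phi_{p,T}(A)$, and then a purely order-theoretic argument using bijectivity of $\phi$ (the preimage $A'$ of $B$ satisfies $A\le A'\le A_n$ for all $n$, hence $A'=A$) to conclude $\phi(A)=B$. Without some such uniqueness-of-extension argument, your proof is incomplete.
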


\begin{proof}
By Lemma \ref{restricted}, the restriction $\phi|_{(0,I]}$ is 
an order automorphism on the operator interval $(0,I]$.   
The map $A \mapsto A^{-1} - I$ is an order anti-isomorphism of $(0, I]$ onto  $[0, \infty)$ 
and its inverse  $A \mapsto (I+A)^{-1}$ is an order anti-isomorphism of $[0, \infty)$ onto $(0, I]$.
These observations together with Theorem \ref{auto_positive} imply that 
there exists an invertible bounded either linear or conjugate-linear operator  
$S : \cH \to \cH$ such that  
$$ \phi(A) = (I + S (A^{-1} - I)S^*)^{-1}$$
for all $A \in (0, I]$. Choose a real number $\lambda \in (1, \infty)$ such that $\lambda > \|S S^*\| = \|S\|^2$.  
Let $A \in (0, I] $. Then 
$$ \phi(A)^{-1} + (\lambda-1) I=  \lambda I - S S^* + S A^{-1}S^* = 
(\lambda I-S S^*)^{1/2} (I + R A^{-1} R^*) (\lambda I-S S^*)^{1/2} , $$
where an invertible bounded either linear or conjugate-linear operator  
$R : \cH \to \cH$ is given by  
$$ R = (\lambda I-S S^*)^{-1/2} \, S .  $$
The equality $R^* = T^{-1}$ defines an invertible bounded either linear or conjugate-linear operator  
$T : \cH \to \cH$. Clearly, we have 
$$ (T T^*)^{-1} = R R^*  =  (\lambda I-S S^*)^{-1} \, S S^* , $$
so that
$$ I+(T T^*)^{-1} = \lambda  (\lambda I-S S^*)^{-1} $$
and 
$$ \lambda I-S S^* = \lambda (I+ (T T^*)^{-1})^{-1} . $$
Therefore,  
$$ \phi(A)^{-1} + (\lambda-1) I= 
(\lambda I-S S^*)^{1/2} (I + (T  A T^*)^{-1}) (\lambda I-S S^*)^{1/2} = $$
$$ = \lambda (I+(T T^*)^{-1})^{-1/2}(I + T  A T^*) (T  A T^*)^{-1}  (I+(T T^*)^{-1})^{-1/2} . $$
It follows that 
$$ \lambda (\phi(A)^{-1} + (\lambda-1) I)^{-1}= 
(I+(T T^*)^{-1})^{1/2}(T  A T^*)(I + T  A T^*)^{-1}  (I+(T T^*)^{-1})^{1/2} , $$
 and so 
$$ f_q( \phi(A)) = (I+(T T^*)^{-1})^{1/2}(I- (I + T  A T^*)^{-1})  (I+(T T^*)^{-1})^{1/2} , $$
where $q = 1-1/\lambda \in (0,1)$. 
Letting $p= q/(q-1) \in (-\infty, 0)$  we have $f_q = f_p^{-1}$ by \eqref{inverse}, 
and so we obtain that $\phi(A) =  \phi_{p, T} (A)$  for every $A \in (0, I]$.

It remains to prove that  $\phi(A) =  \phi_{p, T} (A)$  for every $A \in [0, I]$. 
In other words, we need to show that $\phi_{p, T}$ is the only order automorphism on the effect algebra $[0, I]$
that extends the restriction $\phi|_{(0,I]}$.
Given $A \in [0, I]$, define a decreasing sequence $\{A_n\}_{n \in \bN}$ in the order interval $(0, I]$ by 
$A_n = (1- \frac{1}{n}) A + \frac{1}{n} I$. Clearly, it converges to $A$, and so 
Lemma \ref{fi_p,T} implies that  $\{\phi_{p, T}(A_n)\} = \{\phi(A_n)\}$ converges to $B = \phi_{p, T} (A)$.
Since the sequence $\{\phi(A_n)\}$ is decreasing, it is easily seen that $\phi(A_n) \ge B$ for all $n$. 
From $A_n \ge A$ it follows that  $\phi(A_n) \ge \phi(A)$ for all $n$, and so $B \ge \phi(A)$. 
Since $\phi$ is bijective, there is an operator $A^\prime \in [0, I]$ such that $\phi(A^\prime) = B$. Hence, 
$A^\prime \ge A$, as $\phi(A^\prime) \ge \phi(A)$.  
Since  $\phi(A_n) \ge \phi(A^\prime)$,  we have $A_n \ge A^\prime$ for all $n$, and so $A  \ge A^\prime$.
Thus, $A  = A^\prime$ and  $\phi(A) = B$. So,   $\phi(A) =\phi_{p, T} (A)$  for every $A \in [0, I]$. 
\end{proof}

The preceding proof also reveals that the group of order automorphisms of the effect algebra $[0, I]$ is isomorphic to 
the group $G_1$, as it has been shown in \cite{Se17b}.

In \cite{Se12a} another description of order automorphisms of the effect algebra $[0, I]$ was given. 
We now show that it is closely related to the description from Theorem \ref{short_proof}.

\begin{theorem}
\label{another}
Assume that $\phi : [0, I] \to [0, I]$ is an order
automorphism. Then there exist a negative real number $p$, a real number $r \in (0,1)$  and an invertible
bounded either linear or conjugate-linear operator  $S : \cH \to \cH$ with $\| S \| \le 1$ such that 
$$ \phi (A) =  f_p \left( (f_r(S S^*))^{-1/2} f_r(S A S^*) (f_r(S S^*))^{-1/2} \right) $$
for all $A \in [0, I]$.
\end{theorem}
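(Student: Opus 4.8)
The plan is to deduce Theorem~\ref{another} directly from Theorem~\ref{short_proof}, exploiting exactly the closeness advertised just before the statement. By Theorem~\ref{short_proof} there are a negative real number $p$ and an invertible (linear or conjugate-linear) operator $T$ with $\phi = \phi_{p,T}$; equivalently, writing $M = I + (T T^*)^{-1}$,
$$ f_p^{-1}(\phi(A)) = M^{1/2}\left(I - (I + T A T^*)^{-1}\right)M^{1/2}, \qquad A \in [0,I]. $$
So it suffices to rewrite the middle operator in the shape $(f_r(S S^*))^{-1/2}\, f_r(S A S^*)\,(f_r(S S^*))^{-1/2}$ for a suitable $r \in (0,1)$ and a suitable invertible $S$ with $\|S\| \le 1$, and then apply $f_p$.

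The engine of the proof is an elementary functional-calculus identity for the members of $\cG$: for every $r \in (0,1)$ and every positive operator $X$,
$$ I - \left(I + \tfrac{r}{1-r}\,X\right)^{-1} = r\, f_r(X), $$
which one verifies at the scalar level straight from \eqref{function f_p} and transports by functional calculus. I would then set $S = \mu T$ with $\mu = \sqrt{(1-r)/r}$, a positive real scalar, so that $S$ is again invertible with $S S^* = \mu^2 T T^*$ and $S A S^* = \mu^2 T A T^*$ in both the linear and the conjugate-linear case. The point of this choice is that $\tfrac{r}{1-r} S A S^* = T A T^*$, so the identity above gives $I - (I + T A T^*)^{-1} = r\, f_r(S A S^*)$ for all $A \in [0,I]$.

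The pleasant feature I would emphasize is that the outer congruence factor is then pinned down for free: evaluating the very same identity at $A = I$ and using $(I + (T T^*)^{-1})^{-1} = I - (I + T T^*)^{-1}$ yields $M^{-1} = r\, f_r(S S^*)$, hence $M^{1/2} = r^{-1/2}(f_r(S S^*))^{-1/2}$. Substituting both relations into the displayed expression for $f_p^{-1}(\phi(A))$ makes the scalar factors $r^{-1/2}\cdot r \cdot r^{-1/2}$ cancel, leaving precisely $(f_r(S S^*))^{-1/2}\, f_r(S A S^*)\,(f_r(S S^*))^{-1/2}$; applying $f_p$ completes the formula.

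It remains only to honour the normalization $\|S\| \le 1$. Since $\|S\| = \mu\,\|T\| = \sqrt{(1-r)/r}\,\|T\| \to 0$ as $r \to 1^-$, I would simply take any $r$ with $\frac{\|T\|^2}{1 + \|T\|^2} \le r < 1$; then $\|S\| \le 1$ while still $r \in (0,1)$, and $f_r(S S^*)$ is invertible because $S S^* > 0$ and $f_r$ is a positive increasing bijection of $[0,\infty)$. I expect the only real obstacle to be discovering this reparametrization — in particular, recognizing that a single real rescaling $S = \mu T$ simultaneously matches the inner argument and, through its value at $A = I$, the outer congruence factor; once the identity above is in hand, every remaining step is a routine verification.
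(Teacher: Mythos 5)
Your proposal is correct and takes essentially the same route as the paper: after invoking Theorem~\ref{short_proof}, both arguments rewrite $f_p^{-1}(\phi_{p,T}(A))$ by replacing $T$ with a positive scalar multiple $S$ and using the identity $I-(I+TAT^*)^{-1}=r\,f_r(SAS^*)$ together with its evaluation at $A=I$ to absorb the congruence factor $(I+(TT^*)^{-1})^{1/2}$ into $(f_r(SS^*))^{-1/2}$. The only difference is organizational: the paper splits into the cases $\|T\|\le 1$ (taking $S=T$, $r=1/2$) and $\|T\|>1$ (taking $S=T/\|T\|$, $r=\|T\|^2/(1+\|T\|^2)$), whereas your single parametrization $S=\sqrt{(1-r)/r}\,T$ with any $r\in\bigl[\|T\|^2/(1+\|T\|^2),\,1\bigr)$ handles both at once and additionally shows that $r$ is not uniquely determined.
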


\begin{proof}
By Theorem \ref{short_proof}, there exist a negative real number $p$ and an invertible
bounded either linear or conjugate-linear operator  $T : \cH \to \cH$ such that 
$\phi (A) = \phi_{p, T} (A)$ for all $A \in [0, I]$. If $\|T\| \le 1$ then take $S = T$ and observe that 
$$  f_p^{-1}( \phi_{p, T} (A)) = (f_{1/2}(S S^*))^{-1/2} f_{1/2}(S A S^*) (f_{1/2}(S S^*))^{-1/2} , $$
so that the desired formula with $r=1/2$ follows.

Assume therefore that $\|T\| > 1$. Let
$$ S = \frac{1}{\|T\|} \, T \ \  \textrm{ and } \ \  r = \frac{\|T\|^2}{1+\|T\|^2} , $$
so that 
$$ \|S\| = 1 \ \  \textrm{ and } \ \  \frac{r}{1-r}  = \|T\|^2 . $$
Then 
$$ (f_r(S S^*))^{-1} = r + (1-r)(S S^*)^{-1} = r \, (I+(T T^*)^{-1}) , $$
and
$$ r \, f_r(S A S^*) =   I - (1-r) (r S A S^*+(1-r)I )^{-1}  = I - (I + T A T^*)^{-1} , $$
and so  
$$  (f_r(S S^*))^{-1/2} f_r(S A S^*) (f_r(S S^*))^{-1/2} =  $$
$$ = (I+(T T^*)^{-1})^{1/2} (I - (I + T A T^*)^{-1}) (I+(T T^*)^{-1})^{1/2} = f_p^{-1}( \phi_{p, T}(A) )  , $$
completing the proof.
\end{proof}

\vspace{3mm}
{\it Acknowledgment.} The author acknowledges the financial support from the Slovenian Research Agency  (research core funding No. P1-0222). 
He is also grateful to the referee for a useful remark that shortened the proof of Lemma \ref{homeo}.

\vspace{2mm}

\baselineskip 6mm
\noindent
Roman Drnov\v sek \\
Department of Mathematics \\
Faculty of Mathematics and Physics \\
University of Ljubljana \\
Jadranska 19 \\
SI-1000 Ljubljana, Slovenia \\
e-mail : roman.drnovsek@fmf.uni-lj.si 


\begin{thebibliography}{9999}

\bibitem{Mo01}
L. Moln\'{a}r, \textit{Order-automorphisms of the set of bounded observables}, J.
Math. Phys. {\bf 42} (2001), 5904--5909.

\bibitem{Mo07} 
L. Moln\'{a}r, \textit{Selected Preserver Problems on Algebraic Structures of Linear Operators and 
on Function Spaces}, Lect. Notes Math. {\bf 1895}, Springer-Verlag, 2007.

\bibitem{Se12a}
P. \v{S}emrl, \textit{Comparability preserving maps on Hilbert space effect algebras},
Comm. Math. Phys. {\bf 313} (2012), 375--384.

\bibitem{Se17a} 
P. \v{S}emrl, \textit{Order isomorphisms of operator intervals}, 
Integral Equations Operator Theory {\bf 89} (2017), no. 1, 1--42.

\bibitem{Se17b} 
P. \v{S}emrl, \textit{Groups of order automorphisms of operator intervals}, Acta Sci. Math. Szeged (2018).

\end{thebibliography}
\end{document}